\documentclass[a4paper,11pt]{article}

\title{Generation of Iterated Wreath Products Constructed from
  Almost Simple Groups}
\author{Jiaping Lu \\[10pt]
  Mathematical Institute, University of St Andrews,\\
  North Haugh, St Andrews, Fife, KY16 9SS\\[10pt]
  \begin{tabular}{c}\texttt{jl337@st-andrews.ac.uk}\\
    \texttt{jiapingljp@hotmail.com}
  \end{tabular}}

\usepackage{amsmath,amssymb}
\usepackage[margin=2.9cm]{geometry}
\usepackage[amsmath,thmmarks]{ntheorem}
\usepackage{enumitem}
\usepackage{tikz}
\usepackage{mathtools}
\usepackage{tikz}
\usepackage{booktabs}

\theorembodyfont{\slshape}
\newtheorem{lem}{Lemma}[section]

\newtheorem{thm}[lem]{Theorem}
\theorembodyfont{\normalfont}

\theorembodyfont{\slshape}
\theoremnumbering{Alph}

\makeatletter
\theoremstyle{nonumberplain}
\theorembodyfont{\normalfont}
\theoremheaderfont{\normalfont\scshape}
\theoremsymbol{~\ensuremath\square}
\newtheoremstyle{proofstyle}%
  {\item[\theorem@headerfont\hskip\labelsep ##1\theorem@separator]}%
  {\item[\theorem@headerfont\hskip\labelsep ##3\theorem@separator]}
\theoremstyle{proofstyle}
\newtheorem{prf}{Proof:}
\makeatother

\renewcommand{\leq}{\leqslant}
\renewcommand{\geq}{\geqslant}
\newcommand{\nbd}{\nobreakdash-}

\newcommand{\trivsubgp}{\mathbf{1}}
\newcommand{\Cent}[2]{\mathrm{C}_{#1}(#2)}
\newcommand{\Cohom}[2][1]{\mathrm{H}^{#1}(#2)}
\newcommand{\Der}[1]{\mathrm{Der}(#1)}
\newcommand{\End}[2][G]{\operatorname{End}_{#1}(#2)}
\newcommand{\Field}[1]{\mathbb{F}_{#1}}
\newcommand{\Frat}[1]{\Phi(#1)}
\newcommand{\Inn}[1]{\mathrm{Inn}(#1)}

\newcommand{\order}[1]{\mathopen{|}#1\mathclose{|}}
\newcommand{\Out}{\operatorname{Out}}
\newcommand{\Outdiag}{\operatorname{Outdiag}}
\newcommand{\pr}[1]{\operatorname{pr}(#1)}

\newcommand{\set}[2]{\{\,#1\mid#2\,\}}
\newcommand{\Zint}{\mathbb{Z}}

\renewcommand{\geq}{\geqslant}
\renewcommand{\leq}{\leqslant}

\renewcommand{\wr}{\operatorname{wr}}

\setlist[enumerate,1]{label={\normalfont(\roman*)}}

\usepackage{hyperref}
\hypersetup{colorlinks=true, linkcolor={red!50!black},
  citecolor={green!50!black}}

\begin{document}

\maketitle

\begin{abstract}
  Let $G_{1}$,~$G_{2}$, \dots\ be a sequence of almost simple groups  and construct a sequence~$(W_{i})$ of wreath products via $W_{1} =
  G_{1}$ and, for each $i >1$, $W_{i+1} = G_{i+1} \wr W_{i}$ via
  the regular action of each $G_i$.  We determine the minimum
  number~$d(W_{i})$ of generators required for each wreath product in
  this sequence.
\end{abstract}

\paragraph{Keywords:} generating sets, wreath products, finite groups, simple groups

\section{Introduction}
A classic theme of group theory research is the generation property of groups. Elements of a group can be described using its generators. There has been extensive research on generators of specific groups. For example, it is taught in undergraduate group theory courses that symmetric groups can be generated with two elements. Using the Classification of Finite Simple Groups, it is proved that every non-abelian finite simple group is $2$\nbd generated.

One of the interesting generation problems is the growth rate of the minimum number of generators of the direct product $G^n$ where $G$ is a finite group. Wiegold shows in \cite{Wiegold} that the number of required generators grows linearly when $G$ is an imperfect group and it grows logarithmically when $G$ is perfect. Apart from direct products, another way to construct groups is via wreath products. In this paper, we will investigate the minimum number of generators of iterated wreath products.

Bhattacharjee \cite{Bhatt} investigates the generation property of iterated wreath products of alternating groups constructed via the natural action. She proves that the probability that such products can be generated by two random elements is positive. This shows that iterated wreath products of alternating groups are $2$\nbd generated. Quick \cite{MRQ} generalises this result and shows that the same result still holds for iterated wreath products of non-abelian simple groups constructed from any faithful transitive action of the factors. Bondarenko \cite{Bond} considers  iterated wreath products of transitive permutation groups with uniformly bounded number of generators. He establishes that the number of generators of the product involving $G_1, \dots, G_n$ is bounded if and only if the number of generators of the abelian direct product $G_1/G_1'\times G_2/G_2'\times\dots\times G_n/G_n'$ is bounded as well.

Recently, the author of this paper \cite{LuQuick} considers the generators required for the iterated wreath products of symmetric groups, alternating groups or cyclic groups. In particular, it is established that the minimum number of generators of iterated wreath products of symmetric groups and alternating groups of degree at least $5$ is the maximum of $2$ and the number of symmetric group wreath factors involved. In this article, we will generalise this result by allowing wreath factors to be any almost simple group. When determining the minimum number of generators of iterated wreath products in \cite{LuQuick}, calculations based on actions of wreath factors are necessary. There are many choices for a transitive and faithful action of an almost simple group. However, in order to be uniform and consistent among all almost simple groups, we will consider the regular action of each wreath factor. It is worth noting that Woryna \cite{Woryna} investigates the generation of iterated wreath products of abelian groups, where he assumed each abelian group factor acts faithfully and transitively and hence the action is also regular in his case.

This paper will establish the precise number of generators required for an iterated wreath product constructed from the regular action of almost simple groups.  If we take the factors involved to be simple groups, then we recover the observation that iterated wreath products constructed via the regular action of non-abelian simple groups are $2$\nbd generated. Our main result depends on the minimum number of generator of the wreath product $A\wr G$ of an abelian group $A$ constructed from the regular action of an almost simple group $G$. In \cite{Lu97}, Lucchini establishes $d(A\wr G)$ when $G$ is simple, and we will use this result as part of our argument.

To state the theorem, we recall some basic notations. For a group $G$, we write $d(G)$ for the minimum number of generators for $G$. For a finite abelian group $A$ and a prime number $p\mid\order{A}$, we write $d_p(A)$ for the minimum number of generators for the Sylow-$p$ subgroup of $A$. Because of the Fundamental Theorem of Finite Abelian Groups, $d(A)=\max_{p\mid\order{A}}d_p(A)$. We will establish the following theorem.
\begin{thm}\label{thm:main}
Let $G_{1}$,~$G_{2}$, \dots,~$G_{k}$ be a sequence of almost simple groups. Let $S$ be the socle of $G_1.$ Let
\[
W = G_{k} \wr G_{k-1} \wr \dots \wr G_{1}
\]
be the iterated wreath product constructed via the regular action of each factor. Set
\[
A = (G_{k}/G_{k}') \times (G_{k-1}/G_{k-1}') \times \dots \times (G_{2}/G_{2}').
\]
Then
\[
d(W)=\max_p(d(A\times G_1), d(A)+1, d_p(A)+2),
\]
where $p$ ranges over the set of prime numbers dividing both $\order{A}$ and $\order{S}$.
\end{thm}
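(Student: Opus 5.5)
We reduce the computation of $d(W)$ to that of $d(A\wr G_1)$, with $G_1$ acting regularly, and then compute the latter.

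Write $W = X \wr G_1$, where $X = G_k \wr \dots \wr G_2$ and $G_1$ acts regularly on $n=\order{G_1}$ points. We may assume $X$ is nontrivial, i.e.\ $k\geq 2$.

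\textbf{Step 1: reduce to $A \wr G_1$.} First we show $X/X' \cong A$. The base group of each wreath product $G_{i+1}\wr W_i$ has abelianisation $G_{i+1}/G_{i+1}'$ after passing to coinvariants of the transitive action. Induction on $k$ then gives $X^{\mathrm{ab}} \cong A$.

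The quotient $X \to A$ induces a surjection $W \to A \wr G_1$, so $d(W)\geq d(A\wr G_1)$.

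For the reverse inequality we use the standard criterion. Let $N$ be the kernel of $W \to A\wr G_1$, which lies in the base $X^n$. Every chief factor of $W$ below $N$ is either non-abelian, or it is an abelian factor arising inside some $G_i$ ($i\geq 2$) that is not central in the relevant wreath product. The latter are the non-Frattini abelian pieces coming from $G_i'$; since $G_i$ is almost simple, $G_i' \geq \operatorname{soc}(G_i)$, so these pieces are not really there.

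So we show $d(W)=d(A\wr G_1)$ by the lifting argument of \cite{LuQuick} and Quick \cite{MRQ}. Given a generating tuple of $A\wr G_1$ of length $d\geq 2$, we count lifts avoiding each maximal subgroup of $W$ containing $N$-complements. The non-abelian composition factors $\operatorname{soc}(G_i)^{m}$ appear with multiplicity bounded by a function growing slower than the number of lifts. This is a Gaschütz-type or probabilistic count, as in the proofs that iterated wreath products of simple groups are $2$-generated.

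\textbf{Step 2: compute $d(A\wr G_1)$.} The lower bounds come from explicit quotients:
\begin{itemize}
\item $A\times G_1$, via the coinvariant map of the base;
\item $A^2 \cdot$-type quotients that give $d(A)+1$, since the augmentation kernel is a nontrivial module;
\item for $p$ dividing $\order{A}$ and $\order{S}$, a quotient $(A_p\otimes M)\rtimes G_1$ that needs $d_p(A)+2$ generators. Here $M$ is a nontrivial irreducible $\Field{p}G_1$-module occurring in the regular module with $\Cohom{G_1,M}\neq 0$, or with large multiplicity relative to $\dim\End[G_1]{M}$.
\end{itemize}

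For the upper bound we use the Aschbacher–Guralnick / Lucchini formula. For each irreducible $\Field{p}G_1$-module $M$, the number of generators needed for the semidirect product of $G_1$ with $M^{\delta}$ is governed by the multiplicity of $M$ in $A\otimes\Field{p}[G_1]$, which is $d_p(A)\dim M/\dim\End[G_1]{M}$, together with $\dim\Cohom{G_1,M}$. Lucchini's computation in \cite{Lu97} for simple $G$ handles the $S$-part. We extend it to almost simple $G_1$ by restricting modules to $S$: $G_1/S$ is solvable, and its contribution is absorbed into the $d(A\times G_1)$ term through the abelian quotient $G_1/G_1'$.

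\textbf{Main obstacle.} The hardest part is this module-theoretic upper bound for non-trivial $M$. We must show that
\[
\left\lceil \frac{d_p(A)\dim M/\dim\End[G_1]{M} + \dim \Cohom{G_1,M}}{\dim M/\dim \End[G_1]{M}} \right\rceil + \theta \leq \max(\dots).
\]
This requires a bound $\dim\Cohom{G_1,M} \leq \dim M/\dim\End[G_1]{M}$, or at worst the bound relative to $\dim \End{M}$. Such a bound is known (Guralnick–Kantor / Aschbacher–Guralnick: $\dim\Cohom{G,M}<\dim M$ for faithful modules) and yields exactly the extra $+1$ beyond $d_p(A)+1$, only when $p\mid\order{S}$. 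Primes $p\nmid\order{S}$ give modules on which $S$ acts semisimply with trivial cohomology, so they contribute only $d(A)+1$.
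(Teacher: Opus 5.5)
Both steps contain a genuine gap.

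\textbf{Step 1.} Your upper bound $d(W)\le d(A\wr G_1)$ rests on the claim that the abelian chief factors of $W$ inside the kernel ``are not really there'' because $G_i'\geq\operatorname{soc}(G_i)$. That inclusion does not give $G_i'=\operatorname{soc}(G_i)$. For $G_i=P\Gamma L_3(4)$ one has $G_i/\operatorname{soc}(G_i)\cong S_3$, so $G_i'/\operatorname{soc}(G_i)\cong C_3$ is a non-central abelian chief factor. Its induced copies lie in the kernel as non-trivial modules. For $k\ge 3$ the kernel also contains non-trivial abelian factors coming from the augmentation submodules of the permutation modules $(G_i/G_i')^{\Gamma}$ at intermediate levels.

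These are exactly the factors that could force extra generators, and the ``count lifts'' argument you invoke is never carried out for them. The paper uses a precise chain of reductions instead:
\begin{enumerate}
\item $F^{\Gamma}$, with $F=\operatorname{soc}(G_k)$, is the unique minimal normal subgroup of $W$, so Lucchini--Menegazzo gives $d(W)=d((G_k/F)\wr L)$.
\item For $k=2$, Lucchini's formula $d(H\wr G)=\max(d((H/H')\wr G),\lfloor (d(H)-2)/n\rfloor+2)$ for soluble $H$ applies. The soluble layer costs nothing, since $d(G_2/F)\le 3$ and $n=\order{G_1}\ge 60$.
\item For $k>2$, one splits on $\pr{\bar W}$. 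If $\pr{\bar W}=0$, use $d(\bar W)=d(I_{\bar W})$ and the augmentation-ideal version of that formula. If $\pr{\bar W}>0$, use Gruenberg's theorem to get $d(\bar W)=d(L)$, then induct.
\end{enumerate}

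\textbf{Step 2.} The upper bound via Lucchini's formula needs $h_{G_1}(M)\le 2$ for every non-trivial irreducible $\Field{p}G_1$-module $M$, and $h_{G_1}(M)=1$ when $p\nmid\order{S}$. You only handle faithful $M$, where $\delta_{G_1}(M)=0$ and $\order{\Cohom{G_1,M}}<\order{M}$ suffice. Since $\Cent{G_1}{M}$ is either trivial or contains $S$, the remaining case is a non-trivial module for $G_1/S$. There $\Cohom{G_1/\Cent{G_1}{M},M}=0$, so $s_{G_1}(M)=\delta_{G_1}(M)$, the number of complemented chief factors of $G_1/S$ isomorphic to $M$.

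Your remark that the contribution of $G_1/S$ is absorbed into $d(A\times G_1)$ via $G_1/G_1'$ is incorrect: $G_1/G_1'$ only sees trivial modules. For a general soluble top group, $\delta$ can exceed $r$. In $(C_7\times C_7)\rtimes C_3$, with $C_3$ acting by the same non-trivial character on both factors, one gets $\delta=2$, $r=1$ and $h=3$. Even $\delta_{G_1}(M)=1$ for one non-trivial $M$ with $p\nmid\order{S}$ would add a term $d_p(A)+2$ that is absent from the theorem.

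The missing ingredient is Lemma~\ref{lem:deltaGM}. It uses $\Out(S)=\Outdiag(S)\rtimes(\Phi_S\Gamma_S)$, with $\Phi_S\Gamma_S$ abelian and $\Outdiag(S)$ cyclic; the families $D_{2m}(q)$ are treated separately. It gives $\delta_{G_1}(M)\le 1$, and $\delta_{G_1}(M)=0$ when $p\nmid\order{S}$. For alternating and sporadic socles, $\Out(S)$ is abelian, so $\delta_{G_1}(M)=0$ outright.
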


We finish this introduction by giving the structure of this paper. In Section~\ref{sec:pre}, we recall basic definition that we will use. We will also introduce the results that our conclusions depend on, particularly the ones from \cite{Lu97}. In Section~\ref{sec:AwrG}, using the results in~\cite{Lu97} and also extending them to cover a more general case, we will determine the minimum number of generators of the wreath product of an abelian group by an almost simple group constructed via the regular action. We will then provide the main proof of the theorem in the final section. We will prove it by showing that $d(W)=d(A\wr G_1)$ and determining $d(A\wr G_1)$, where $W, A, G_1$ are as defined in Theorem~\ref{thm:main} (see Lemmas~\ref{lem:keylemmaregular-abelian} and \ref{lem:regularinduction} respectively).

\paragraph{Acknowledgements:} The author is funded by the China
Scholarship Council.

\section{Preliminaries}\label{sec:pre}
In this section, we introduce the notations that we will use and summarize the
facts from previous work that we rely upon.  We start with the
definition of wreath products and its associated notation.

Let $G$~and~$H$ be permutation groups on the finite sets $X$~and~$Y$,
respectively.  Let $B = \prod_{y \in Y} G_{y}$ be the direct product
of copies of~$G$ indexed by the set~$Y$.  We shall write $b =
(g_{y}) = (g_{y})_{y \in Y}$ for an element of~$B$ expressed as a
sequence of elements from~$G$ indexed by~$Y$.  We define an action
of~$H$ on~$B$ by permuting the entries of its elements in the same way
that $H$~acts on~$Y$:
\[
(g_{y})^{h} = (g_{yh^{-1}})
\]
for $b = (g_{y}) \in B$ and $h \in H$.  The (permutational) wreath
product $W = G \wr_{Y} H$ of~$G$ by~$H$ is the semidirect product $W =
B \rtimes H$ constructed via this action.  In this article, we shall
construct wreath products from the regular action of $H$.  Consequently, we shall omit the
subscript on the wreath product and simply write $W = G \wr H$ for the
wreath product of~$G$ by~$H$ constructed via the regular action of the
permutation group~$H$.

The wreath product $W = G \wr H$ has a natural action on the Cartesian
product~$X \times Y$ given by
\[
(x,y)^{g} = (x^{g_{y}}, y^{h})
\]
for $(x,y) \in X \times Y$ and $g = (g_{y})h \in W$.  As a
consequence, if $G_{1}$,~$G_{2}$, \dots,~$G_{k}$ is a sequence of
permutation groups acting regularly on the sets $X_{1}$,~$X_{2}$,
\dots,~$X_{k}$, respectively, then we may iterate the wreath product
construction using this induced action.  Thus we recursively
define $W_{1} = G_{1}$ acting upon the set~$X_{1}$ and then, for $2\leq i\leq k$, having
defined~$W_{i-1}$ with an action on $X_{i-1} \times \dots \times
X_{1}$, we define $W_{i} = G_{i} \wr W_{i-1}$ with its action on
$X_{i} \times X_{i-1} \times \dots \times X_{1}$.  

Let $d(G)$ be the minimum number of generators of a finite group $G$. We have the following important result.
\begin{thm}[Lucchini, Menegazzo~{\cite{LuMe}}]\label{thm:LuMe}
If $G$ is a non-cyclic finite group with a unique minimal normal subgroup $N$, then $d(G)=\max(2, d(G/N))$.
\end{thm}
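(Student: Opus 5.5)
We would prove Theorem~\ref{thm:LuMe} by separating an easy lower bound from a lifting argument for the upper bound. For the lower bound, $d(G)\geq d(G/N)$ because generators of $G$ map to generators of $G/N$. Also $d(G)\geq 2$ because $G$ is non-cyclic. So it suffices to show that $G$ is generated by $d:=\max(2,d(G/N))$ elements.

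\emph{Lifting set-up.} Fix a generating $d$-tuple $(x_{1}N,\dots,x_{d}N)$ of $G/N$; we may pad it with trivial elements if $d(G/N)<2$. We look for $n_{1},\dots,n_{d}\in N$ such that $H=\langle x_{1}n_{1},\dots,x_{d}n_{d}\rangle$ equals $G$. For every choice we have $HN=G$. If $H\neq G$, then $H$ lies in a maximal subgroup $M$ with $MN=G$. Since $N$ is the unique minimal normal subgroup, $M$ is core-free in a strong sense, and $M\cap N$ is a proper $M$-invariant subgroup of $N$. We then count the ``bad'' tuples $(n_{i})\in N^{d}$, namely those for which all $x_{i}n_{i}$ lie in a common supplement $M$. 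It suffices to show that the bad tuples number fewer than $|N|^{d}$. By Gasch\"utz's lemma the number of good lifts does not depend on the chosen generating tuple of $G/N$, so no freedom is lost by fixing it.

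\emph{Abelian $N$.} Here $N$ is an irreducible $\Field{p}[G/N]$-module, and it is non-trivial unless $G$ is cyclic of prime-power order; that exceptional case is excluded. Every proper supplement $M$ is a complement. The complements form $|\Der{G/N,N}|$ classes' worth of subgroups, and their number is at most $|N|\cdot|\Cohom{G/N,N}|$. Each complement contains exactly one lift of each $x_{i}$, so it accounts for at most one tuple. The bad tuples therefore number at most $|\Der{G/N,N}|\leq |N|\cdot|\Cohom{G/N,N}|$. Since $d\geq 2$, we need $|\Cohom{G/N,N}|<|N|$. This follows from the standard bound $|\Cohom{G/N,N}|<|N|$ for a faithful irreducible module; faithfulness holds because $N$ is the unique minimal normal subgroup, which forces $C_{G}(N)=N$.

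\emph{Non-abelian $N$.} Here $N=S^{m}$ with $S$ simple, and $G$ embeds in $\mathrm{Aut}(S)\wr \mathrm{Sym}(m)$. The supplements $M$ meet $N$ in subgroups of two kinds: products of subgroups normalised by projections, or diagonal-type subgroups. We would bound the number of bad tuples by summing, over the possible $M\cap N$, the quantity $|G:N_{G}(M)|\cdot|M\cap N|^{d}$. One then needs $\sum_{M}|M\cap N|^{d}/|N|^{d}<1$ for $d\geq 2$. This is where the real work lies, and I expect it to be the main obstacle. I would first try the estimate that the number of maximal supplements of each index $n$ is polynomial in $n$. This uses the fact that $S$ is $2$-generated and that the number of maximal subgroups of $S$ of given index is bounded (CFSG). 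For diagonal-type $M$, the intersection $M\cap N$ has order about $|N|^{1/2}$ at most, and there are at most $|S|^{m-1}\cdot|\Out(S)|$ conjugates. Together these give a sum that is less than $1$ once the exponent $d\geq 2$.
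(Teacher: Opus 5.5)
The paper does not prove this statement; it quotes it from \cite{LuMe}. Your proposal therefore has to stand on its own, and in the non-abelian case it does not.

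With $N=S^{m}$, the whole content of the theorem in that case is the inequality $\sum_{M}\order{G:M}^{-d}<1$, summed over maximal $M$ with $MN=G$ (recall $\order{M\cap N}/\order{N}=1/\order{G:M}$). You leave this as an expectation, and the ingredients you name do not give it:
\begin{enumerate}
\item A bound of $n^{c}$ on the number of maximal supplements of index $n$ only yields $\sum_{n}n^{c-2}$ when $d=2$. That is useless unless $c<1$, and $c\geq 1$ is forced already by $A_{n}$, which has $n$ maximal subgroups of index $n$. So it is also not true that a simple group has boundedly many maximal subgroups of a given index.
\item Your classification of $M\cap N$ gives no estimate for $M\cap N=\trivsubgp$, that is, maximal complements of $N$ (twisted wreath type). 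These do occur in monolithic groups, and their number is a non-abelian cohomology count you do not control.
\item The count $\order{S}^{m-1}\order{\Out(S)}$ covers only full diagonals of $S^{m}$. It does not cover products of diagonals over the blocks of every block system of $G$ on the $m$ factors.
\item Even for $G=S$, the needed bound $\sum_{M}\order{S:M}^{-2}<1$ requires detailed classification-based information on maximal subgroups.
\end{enumerate}
As I understand it, the proof in \cite{LuMe} instead works through the almost simple group induced by $N_{G}(S_{1})$ on a single factor $S_{1}$, where the classification-dependent results of \cite{DaLu} apply, and then handles the diagonal-type obstructions separately. Some such reduction, not a global union bound, is the missing idea.

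Your abelian case is the standard argument and is essentially right, but two claims in it are false as stated. A unique minimal normal subgroup does not force $\Cent{G}{N}=N$. Also, $N$ can be a trivial module with $G$ non-cyclic: take $Q_{8}$ or $\mathrm{SL}(2,3)$ with $N=Z(G)$. The repair is a case split:
\begin{enumerate}
\item If $N\leq\Frat{G}$, there are no proper supplements, so every lift generates and $d(G)=d(G/N)$.
\item Otherwise some maximal $M$ has $M\cap N=\trivsubgp$, and $M$ is core-free. Then $\Cent{G}{N}\cap M$ is normalised by $M$ and centralised by $N$, hence normal in $G$, hence trivial. So $\Cent{G}{N}=N(\Cent{G}{N}\cap M)=N$, and $N$ is a faithful non-trivial irreducible $G/N$-module. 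The bound $\order{\Cohom{G/N,N}}<\order{N}$ (Lemma~\ref{lem:LuStam}\ref{i:H1<rGM}, or Aschbacher--Guralnick) then finishes your complement count.
\end{enumerate}
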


Our work relies upon the results in Lucchini's work~\cite{Lu97}. In the following, we will recall the notations required and introduce useful consequences.

A chief factor of a finite group~$G$ is the quotient~$H/K$ of normal subgroups $H$~and~$K$ with $K < H$ such that there is no normal subgroup~$N$ of~$G$ satisfying $K < N < H$. For a chief factor $X=H/K$ and a normal subgroup $N$ of $G$, we say $X$ is \textit{contained} in $N$ if $K< H\leq N$. We will also say $X=H/K$ is \textit{contained} in $G/N$ if $N\leq K<H\leq G$. We will say that $H/K$~is a \textit{complemented} chief factor if there exists a subgroup~$C$ such that $G = HC$ and $H \cap C = K$; that is, when $C/K$~is a complement to the subgroup~$H/K$ in the quotient group~$G/K$. Observe that if the chief factor $H/K$ is abelian, then such a subgroup $C$ must be maximal in $G$. In particular, no abelian chief factor of $G$ contained in the Frattini subgroup $\Frat{G}$ is complemented in $G$.

Given a group $G$, we say that an additively written abelian group~$M$ on which $G$~acts via automorphisms is a \emph{$G$\nbd module}. A $G$\nbd module $M$ is called \textit{trivial} if $m^g=m$ for all $g\in G$ and all $m\in M$. Otherwise, the $G$\nbd module $M$ is called \textit{non-trivial}. We can consider a $G$\nbd module $M$ as a module for the group ring~$\Zint G$. If such a $G$\nbd module $M$~is isomorphic, as a $G$\nbd module, to an abelian chief factor of $G$, then there exists a unique prime $p$ such that $M$ is an elementary abelian $p$\nbd group, and the induced action of $G$ makes $M$ into an irreducible $\Field{p}G$\nbd module. We define $\delta_G(M)$ to be the number of abelian chief factors of $G$ that are isomorphic to $M$ as $G$\nbd modules and complemented in $G$. Using Theorem~2.1 in \cite{Lafuente}, it follows that $\delta_G(M)$ does not depend on choices of chief series of $G$ and hence is an invariant of $G$. 

For a group $G$,  let $I_{G}$ be the kernel of the natural map $\Zint G \to \Zint$ induced by mapping every element of~$G$ to~$1$, which is also called the \emph{augmentation ideal} of the group ring~$\Zint G$. We denote by $d(I_G)$ the minimum number of generators of $I_G$ as a $G$\nbd module.

Following Lucchini's earlier work~\cite{Lu97} on the generation of wreath products, in order to obtain the number of generators of a group~$G$, we shall use the \emph{presentation rank}~$\pr{G}$. It is a non-negative integer by definition in~\cite{Roggenkamp}, but this also follows from Lemma~\ref{lem:pr} below. Our work relies on the following results involving $\pr{G}$. Lemma~\ref{lem:pr} below characterises $\pr{G}$ and links $d(G)$ to $d(I_G)$, the minimum number of generators of the augmentation ideal of $G$ as a $G$\nbd module.

\begin{lem}[Roggenkamp~{\cite[Theorem 2.1]{Roggenkamp}}]\label{lem:pr}
Let $G$ be a finite group. Then
\[
\pr{G}=d(G)-d(I_G).
\]
\end{lem}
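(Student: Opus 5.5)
I would first fix the definition of $\pr{G}$ from \cite{Roggenkamp}, in the form due to Gruenberg. Take a free presentation $1\to R\to F\to G\to1$ with $F$ free of finite rank and relation module $\bar R=R/R'$. Write $d_G(\bar R)$ for the minimal number of generators of $\bar R$ as a $G$\nbd module and $d(F)$ for the rank of $F$. Then
\[
\pr{G}=d_G(\bar R)-d(F)+1 .
\]
The first step is to check that this is independent of the chosen presentation, which I expect to follow from the usual Schanuel/Tietze argument.

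\textbf{Choice of presentation and the Gruenberg sequence.} I would choose $F$ of rank exactly $d=d(G)$. The basic tool is the Gruenberg relation sequence
\[
0\to\bar R\to (\Zint G)^{d}\to I_G\to 0,
\]
where the generator $e_i$ is sent to $x_i-1$ and $x_1,\dots,x_d$ generate $G$.

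\textbf{Comparison with a minimal resolution of $I_G$.} Let $m=d(I_G)$ and fix a surjection $(\Zint G)^m\to I_G$ with kernel $K$. By Schanuel's lemma,
\[
\bar R\oplus(\Zint G)^m\cong K\oplus(\Zint G)^d .
\]
The claim $\pr{G}=d-m$ is then equivalent to
\[
d_G(\bar R)=(d-1)+(d-m).
\]
The upper bound $d_G(\bar R)\le 2d-1-m$ I would obtain by transporting the generators via this isomorphism. The idea is that $d-m$ of the free summands of $(\Zint G)^d$ can be absorbed, and that $K$ needs only $m-1$ generators beyond those summands. For the latter I would use the rational decomposition $K\otimes\mathbb{Q}\cong\mathbb{Q}G^{m-1}\oplus\mathbb{Q}$ together with a minimality argument.

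The lower bound I would reduce to local computations. Both $\bar R$ and $K$ are $\Zint G$\nbd lattices. By Swan's theorem, the number of generators of a lattice in a given genus is determined by its localizations $\bar R_{(p)}$ at the primes $p\mid\order{G}$, up to a correction governed by the rational rank. One then compares $\bar R_{(p)}$ and $K_{(p)}$ over the semilocal rings $\Zint_{(p)}G$, where Krull--Schmidt-type cancellation is available.

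\textbf{Main obstacle.} I expect the main difficulty to be this lower bound. Concretely, one must show that $\bar R$ cannot be generated by fewer than $2d-1-m$ elements, i.e.\ that no free summand of $(\Zint G)^d$ can be cancelled beyond what $m=d(I_G)$ allows. My first attempt would be to localize at each prime $p$ and use that minimal generating sets of modules over $\Zint_{(p)}G$ are detected modulo the Jacobson radical. This turns the question into counting multiplicities of projective indecomposables in the tops of $\bar R/p\bar R$ and $I_G/pI_G$. I would then combine the primes using Swan's result that locally free cancellation holds after adding one free summand. The delicate point is the possible extra $+1$ in that combination, and I would handle it using $d\ge d(I_G)$.

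Finally, it follows from $d(I_G)\le d(G)$ that this formula gives $\pr{G}\ge 0$, as remarked before the lemma.
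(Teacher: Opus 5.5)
The paper does not prove this lemma; it quotes it from Roggenkamp~\cite[Theorem~2.1]{Roggenkamp}. Your from-scratch attempt has a genuine gap at the first step. The formula you adopt as the definition, $\pr{G}=d_G(\bar R)-d(F)+1$, is not the presentation rank, and with it the statement is false.

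To see this, note that $\bar R\otimes_{\Zint G}\Zint\cong R/[R,F]$. By Hopf's formula, and because $R/(R\cap F')\cong RF'/F'$ is free abelian of rank $d(F)$, this is $\Zint^{d(F)}\oplus M(G)$, where $M(G)$ is the Schur multiplier. Hence $d_G(\bar R)\ge d(F)+d(M(G))$ for every presentation, so your quantity is always at least $1$. Two examples:
\begin{enumerate}
\item For $G$ cyclic of order $n\ge 2$ and $F$ of rank $1$, one has $\bar R\cong\Zint$, so your $\pr{G}$ equals $1$. But $d(G)-d(I_G)=1-1=0$.
\item For $G=C_2^{k}$, one has $d(I_G)\ge d(I_G/I_G^2)=d(G^{ab})=k=d(G)$, so the right-hand side is $0$. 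Yet $d_G(\bar R)-d(F)\ge\binom{k}{2}$.
\end{enumerate}
So no definition of the form $d_G(\bar R)-d(F)+c$ can work. The generator count of the relation module is driven by multiplier and $\mathrm{H}^{2}(G,-)$ data that play no role in $d(G)-d(I_G)$. You need to start from the actual definition of $\pr{G}$ used in \cite{Roggenkamp}.

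The same problem shows up concretely in your plan.
\begin{itemize}
\item Your reformulated target $d_G(\bar R)=2d-1-m$ demands $d_G(\bar R)=d-1<d(F)$ whenever $m=d$, for instance for the groups above. So the upper bound you set out to prove is impossible.
\item The step meant to deliver that bound is wrong. $K\otimes\mathbb{Q}\cong\mathbb{Q}G^{m-1}\oplus\mathbb{Q}$ contains the trivial module with multiplicity $m$, while a $k$\nbd generated $\mathbb{Q}G$\nbd module contains it at most $k$ times. So $K$ needs at least $m$ generators, not $m-1$.
\item Schanuel's lemma only controls $\bar R\oplus(\Zint G)^m$. Getting back to $\bar R$ requires cancelling free summands of $\Zint G$\nbd lattices, which fails in general (Swan's stably free non-free ideals). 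Neither your ``absorption'' step nor your local-to-global lower-bound argument addresses this.
\end{itemize}
Cancellation is the kind of integral representation theory input a proof along these lines would have to supply. As it stands, the proposal does not prove the lemma.
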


\begin{lem}[Gruenberg~{\cite[(B.ii)]{Gru76}}]
  \label{lem:Gruenberg}
  Let $G$~be a finite group. If $\pr{G} > 0$ and $N$~is a soluble normal subgroup of~$G$, then $d(G) = d(G/N)$.
 \end{lem}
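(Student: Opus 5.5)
The plan is induction on $\order{N}$, reducing to the case where $N$ is a minimal normal subgroup. Since $N$ is soluble, a minimal normal subgroup $N_1\leq N$ of $G$ is elementary abelian. It is then an irreducible $\Field{p}G$\nbd module $M$ for some prime $p$.

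\textbf{Step 1: the induction step passes positivity of presentation rank to the quotient.} The natural map $\Zint G\to\Zint(G/N_1)$ maps $I_G$ onto $I_{G/N_1}$. Hence $d(I_{G/N_1})\leq d(I_G)$. If we know $d(G)=d(G/N_1)$, then Lemma~\ref{lem:pr} gives
\[
\pr{G/N_1}=d(G/N_1)-d(I_{G/N_1})\geq d(G)-d(I_G)=\pr{G}>0 .
\]
Since $N/N_1$ is a soluble normal subgroup of $G/N_1$ of smaller order, induction gives $d(G/N_1)=d(G/N)$. So everything reduces to showing $d(G)=d(G/N)$ when $N$ is abelian and minimal normal.

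\textbf{Step 2: the Frattini case.} The inequality $d(G)\geq d(G/N)$ is automatic. If $N\leq\Frat{G}$, then lifts of generators of $G/N$ generate $G$, so equality holds. We may therefore assume $N$ is complemented, so $\delta_G(M)\geq 1$.

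\textbf{Step 3: the complemented case.} Suppose for contradiction that $d(G)>d(G/N)$. By Gaschütz's lifting argument this forces $d(G)=d(G/N)+1$. The obstruction is then purely module-theoretic: the number of complemented chief factors of $G$ isomorphic to $M$ is too large for $d(G/N)$ generators to cover. Concretely, the Aschbacher--Guralnick/Lucchini formula expresses $d(G)$ as
\[
\theta_M+\left\lceil\frac{r_M(\delta_G(M))+s_M}{n_M}\right\rceil,
\]
with $\theta_M\in\{0,1\}$, $n_M=\dim_{\End[G]{M}}M$, $s_M=\dim\Cohom{G,M}$, and $r_M$ a function of $\delta_G(M)$.

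On the other side, the Cossey--Gruenberg--Kovács description of $d(I_G)$ takes a maximum, over irreducible $G$\nbd modules and the abelianisation, of expressions of exactly the same shape. This is because $d(I_G)$ is governed by $\mathrm{Hom}_G(I_G,M)\cong\Der{G,M}$, whose dimension is computed from $\delta_G(M)$ and $\Cohom{G,M}$. Comparing the two formulas for this particular $M$ should give $d(I_G)\geq d(G)$, hence $\pr{G}\leq 0$, which is the desired contradiction.

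\textbf{Main obstacle.} The delicate point is Step 3. One must match the count of complemented $M$\nbd factors bounding $d(G)$ from below with the count bounding $d(I_G)$ from below. This means checking that $\Der{G,M}$ has the right dimension, namely $\delta_G(M)\cdot\dim M$ plus the contribution from $\Cohom{G,M}$. It also requires handling the trivial-module case via $d(G/G')$, where $d(I_G)\geq d(G/G')$ directly.

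I would first prove the inequality $d(I_G)\geq d(G)$ in the situation $d(G)=d(G/N)+1$ by an explicit count of derivations. That count uses the $\delta_G(M)$ complements of $M$\nbd factors to build enough independent $G$\nbd maps $I_G\to M$.
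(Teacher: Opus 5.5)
The paper does not prove this lemma; it is quoted from Gruenberg, so your argument has to stand on its own. Steps~1 and~2 are sound. The surjection $I_G\to I_{G/N_1}$ transfers $\pr{G}>0$ to $G/N_1$ once $d(G)=d(G/N_1)$ is known, and an abelian minimal normal subgroup not in $\Frat{G}$ is complemented. But Step~3 carries the entire content of the lemma, and there the decisive comparison is asserted rather than carried out: ``comparing the two formulas should give $d(I_G)\geq d(G)$'' is precisely the claim to be proved.

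The ingredients you name are also misstated. Your formula adds a $\delta_G(M)$\nbd dependent term to $\dim\Cohom{G,M}$. But $\dim_{\End{M}}\Cohom{G,M}=\delta_G(M)+\dim_{\End{M}}\Cohom{G/\Cent{G}{M},M}$ already contains one dimension per complemented factor isomorphic to $M$, so this double counts. Likewise, for non-trivial $M$ one has $\dim_{\End{M}}\Der{G,M}=\dim_{\End{M}}M+\dim_{\End{M}}\Cohom{G,M}$, not ``$\delta_G(M)\cdot\dim M$ plus $\Cohom{G,M}$''. Most importantly, a lower bound for $d(I_G)$ coming from $\Der{G,M}$ only yields a contradiction if you also know that $d(G)$ does not exceed it. Nothing in your sketch connects the hypothesis $d(G)>d(G/N)$ to the size of $\Der{G,N}$.

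The missing idea is the sharp form of Gasch\"utz's count; you used only its consequence $d(G)\leq d(G/N)+1$. Write $G=K\ltimes N$, $d=d(G/N)$, $F=\End{N}$, $r=\dim_F N$, and let $k_1,\dots,k_d$ generate $K$.
\begin{enumerate}
\item Each of the $\order{N}^d$ lifts $(k_1n_1,\dots,k_dn_d)$ generates some $H$ with $HN=G$. Since $N$ is abelian, $H\cap N$ is normal in $G=HN$. So if $d(G)>d$, every such $H$ is a complement of $N$.
\item A complement contains exactly one lift and is generated by it, and complements correspond bijectively to $\Der{K,N}$. Hence $\order{\Der{G/N,N}}=\order{N}^d$, i.e.\ $\dim_F\Der{G/N,N}=dr$.
\item Inflation identifies $\Der{G/N,N}$ with the kernel of the restriction map $\Der{G,N}\to\End{N}=F$. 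This map is onto, because $kn\mapsto n$ is a derivation restricting to the identity. Hence $\dim_F\mathrm{Hom}_{\Zint G}(I_G,N)=\dim_F\Der{G,N}=dr+1$.
\item A $G$\nbd map from an $m$\nbd generated module to $N$ is determined by the images of the generators, so $dr+1\leq mr$. This gives $d(I_G)\geq d+1=d(G)$ and $\pr{G}\leq 0$, the desired contradiction.
\end{enumerate}
This argument handles trivial and non-trivial $N$ uniformly. It needs neither the Aschbacher--Guralnick formula nor the Cossey--Gruenberg--Kov\'acs description of $d(I_G)$. If you prefer to cite, the correct jump formula is $d(G)=\theta+\lceil\dim_F\Cohom{G,N}/r\rceil$. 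This equals $h_G(N)$ for non-trivial $N$ and is at most $d(G/G')$ for trivial $N$, so only the elementary lower bounds on $d(I_G)$ are then required.
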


For a finite group $G$ and an additively written $G$\nbd module $A$, the first cohomology group~$\Cohom{G,A}$ is equal to the quotient of the group~$\Der{G,A}$ of derivations $G \to A$ by the group~$\Inn{G,A}$ of inner derivations (see, for example, \cite[Chapter~11]{Robinson}). A derivation $\delta \colon G\to A$ is a function from the group $G$ to the additive module $A$ such that for $g, h\in G$,
\[
(gh)^{\delta} = (g^{\delta})^{h} + h^{\delta},
\]
while an inner derivation $\delta$ is a derivation for which there exists an element $a\in A$ such that for every $g\in G$,
\[
g^{\delta}=a-a^g.
\]
The results developed by Lucchini in \cite{Lu97} rely on $\Cohom{G,M}$, where $M$ is a $G$\nbd module.

We write $\End{M}$ for the endomorphism ring of $M$ as an $\Field{p}G$\nbd module. For an irreducible $\Field{p}G$\nbd module $M$, Schur's Lemma tells us that $\End{M}$ is a division ring containing multiplication scalers. Since $\End{M}$ is finite, it is a field extension of $\Field{p}$. The action of $\End{M}$ on $M$ induces an action of $\End{M}$ on the group of derivations $\Der{G, M}$. Therefore, we can view $M$ and $\Cohom{G,M}$ as vector spaces over $\End{M}$. In this context, Lucchini introduces the following parameters:
\begin{align}
  r_{G}(M) &= \dim_{\End{M}}M \notag \\
  s_{G}(M) &= \dim_{\End{M}}\Cohom{G,M} \notag \\
  h_{G}(M) &= \left\lfloor \frac{s_{G}(M)-1}{r_{G}(M)} \right\rfloor +
  2 \label{eq:h_G}.
\end{align}
The first two parameters are the exponents of the orders of $M$ and $\Cohom{G,M}$ when expressed as powers of the order of $\End{M}$. When we use these parameters, we will show that for the $G$\nbd modules that we are interested in, $\End{M}$ contains only the scalers. Thus $r_G(M)$ is the dimension of $M$. 

Finally, we collect useful results from \cite{Lu97} and \cite{Stammbach} in the following. Given a group $G$ and a $G$\nbd module $M$, the \textit{centralizer} of $M$ in $G$, denoted by $\Cent{G}{M}$, is the set $\set{g\in G}{m^g=m~\text{for all $m\in M$}}$, and it is a normal subgroup of $G$.

\begin{lem}\label{lem:LuStam}
\begin{enumerate}
\item\label{i:Lucchini-s}{\normalfont (Lucchini~{\cite[1.2]{Lu97}})}
 Let $M$~be an irreducible $G$\nbd module for a finite group~$G$.
    Then
    \[
    s_{G}(M) = \delta_{G}(M) + \dim_{\End{M}} \Cohom{G/\Cent{G}{M},M}.
    \]
\item\label{i:H1<rGM}{\normalfont (Lucchini~{\cite[1.3]{Lu97}})}
Let $M$ be an irreducible $G$\nbd module. Then
\[
\dim_{\End{M}}H^1(G/C_G(M),M)<r_G(M).
\]
\item\label{i:Stam}{\normalfont (Stammbach~{\cite[Theorem A]{Stammbach}})}
Given a prime number $p$, a finite group $G$ is $p$-soluble if and only if $\Cohom{G/C_G(M),M}=0$ for all irreducible $\Field{p}G$\nbd modules $M$.
\item\label{i:augmentationgenerator}{\normalfont (Lucchini~{\cite[Proposition 1]{Lu97}})}
Let $H$ be a finite group and $G$ be a transitive permutation group of degree $n$. Then
\begin{align*}
d(I_{H\wr G})=\max\left(d(I_{(H/H')\wr G}), \left\lfloor\frac{d(I_H)-2}{n}\right\rfloor+2\right).
\end{align*}
\item\label{i:soluble}{\normalfont (Lucchini~{\cite[Theorem 2]{Lu97}})}
If $H$ is a finite soluble group and $G$ is a transitive permutation group of degree $n$, then
\[
d(H\wr G)=\max\left(d\left(\frac{H}{H'}\wr G\right),\left\lfloor\frac{d(H)-2}{n}\right\rfloor+2\right).
\]
\item\label{i:Lucchini-AwrG}{\normalfont (Lucchini~{\cite[Theorem 4]{Lu97}})}
Let $A\wr G$ be the wreath product of a finite abelian group $A$ by a finite group $G$ constructed via the regular action of $G$. Set 
\[
\rho_p=\max_Mh_G(M)+d_p(A)
\]
where $M$ ranges over the set of non-trivial irreducible $\Field{p}G$\nbd modules, with $\rho_p=0$ if every irreducible $\Field{p}G$\nbd modules is trivial.  Then
\[
d(A\wr G)=\max_{p\mid\order{A}}(d(A\times G),\rho_p).
\]
\end{enumerate}
\end{lem}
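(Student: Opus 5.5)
The plan is to treat Lemma~\ref{lem:LuStam} as a collection of results quoted from \cite{Lu97} and \cite{Stammbach}. No new argument is needed. The work is to check that each item, as stated here, matches the cited source under the conventions of this paper: right actions, derivations satisfying $(gh)^{\delta}=(g^{\delta})^{h}+h^{\delta}$, and the parameters $r_G(M)$, $s_G(M)$, $h_G(M)$ of~\eqref{eq:h_G}. I would go through the items in order and record, for each, the exact hypotheses in the source.

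For items \ref{i:Lucchini-s} and \ref{i:H1<rGM}, I would recall the proof in \cite[1.2, 1.3]{Lu97} only in outline.
\begin{itemize}
\item For \ref{i:Lucchini-s}, use the inflation--restriction sequence for $\Cent{G}{M}\trianglelefteq G$. Then identify $\dim_{\End{M}}\Cohom{\Cent{G}{M},M}^{G}$ with the number of complemented chief factors isomorphic to $M$. Here I would use the fact that $\delta_G(M)$ is an invariant, via \cite{Lafuente}, as recalled in Section~\ref{sec:pre}.
\item Item \ref{i:H1<rGM} is the standard bound for a faithful irreducible module.
\end{itemize}
Item \ref{i:Stam} is exactly \cite[Theorem A]{Stammbach}. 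Item \ref{i:soluble} is \cite[Theorem 2]{Lu97}. For item \ref{i:augmentationgenerator}, I would confirm that the formula in \cite[Proposition 1]{Lu97} is stated for arbitrary transitive $G$ and arbitrary finite $H$, with $n$ the degree.

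The only place where real care is needed is item \ref{i:Lucchini-AwrG}. Lucchini's Theorem~4 is phrased in terms of the regular action and of non-trivial irreducible modules. I would check two things there.
\begin{itemize}
\item The contribution of the trivial module $\Field{p}$ must be exactly absorbed into the term $d(A\times G)$, which is why the maximum over $M$ in $\rho_p$ excludes trivial modules.
\item The convention $\rho_p=0$, used when every irreducible $\Field{p}G$\nbd module is trivial (that is, when $O^{p}(G)$ acts trivially, e.g.\ $G$ a $p$\nbd group), must be consistent with the source.
\end{itemize}
I expect this bookkeeping of conventions, rather than any mathematics, to be the main obstacle. If a discrepancy appeared, I would rederive the statement by combining item \ref{i:augmentationgenerator} with Lemma~\ref{lem:pr} and Lemma~\ref{lem:Gruenberg}. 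The cohomology of $A\wr G$ with coefficients in $M$ then reduces, by Shapiro's lemma for the regular action, to $\Cohom{G,M}$ together with $\mathrm{Hom}(A,M)$ of dimension $d_p(A)\dim M$. From this one recovers the count $h_G(M)+d_p(A)$.
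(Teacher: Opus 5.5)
Your plan matches the paper: the paper states this lemma without proof and quotes each item from \cite{Lu97} and \cite{Stammbach}, so citing them is all that is required. One correction to your outline of item~(i): the $\End{M}$\nbd dimension that equals $\delta_G(M)$ is that of the image of the restriction map $\Cohom{G,M}\to\Cohom{\Cent{G}{M},M}^{G}$, i.e.\ the kernel of the transgression into $\Cohom[2]{G/\Cent{G}{M},M}$; the whole of $\Cohom{\Cent{G}{M},M}^{G}=\mathrm{Hom}_{G}(\Cent{G}{M},M)$ also detects non-complemented factors, and for instance for the non-split extension $G=2^{3}\cdot L_{3}(2)$ with $M=O_2(G)$ it is one-dimensional while $\delta_G(M)=0$.
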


In this paper, we are interested in the case that $G$ is an almost simple group with socle $S$. Then in the third part of the lemma above, recall that $G$ is $p$\nbd soluble if and only if $p\nmid\order{S}$.

\section{Wreath products of an abelian group by an almost simple group of Lie type}\label{sec:AwrG}

In Lucchini~\cite[Theorem 4]{Lu97} (see Lemma~\ref{lem:LuStam}\ref{i:Lucchini-AwrG}), a formula is given for $d(A\wr G)$, where $A$ is a finite abelian group and the wreath product is constructed via the regular action of $G$. We will use this result to investigate the case when $G$ is an almost simple group of Lie type in this section. The formula relies on the maximum of $h_G(M)$ among non-trivial $\Field{p}G$\nbd modules for a fixed prime $p$. By Equation~\eqref{eq:h_G} and Lemma~\ref{lem:LuStam}\ref{i:Lucchini-s}, we can use $\delta_G(M)$ to determine this maximum. We will devote the first half of this section to $\delta_G(M)$. Establishing this, we will then give the formula for $d(A\wr G)$.

Throughout this section, let $S$ be a simple group of Lie type and $G$ be an almost simple group with socle $S$. We denote by $\Out(S)$ the outer automorphism group of $S$, by $\Outdiag(S)$ the outer diagonal automorphism group of $S$, by $\Phi_S$ the field automorphisms group of $S$, and by $\Gamma_S$ the graph automorphism group of $S$.

Consider the normal series
\[
\trivsubgp<S\leq G,
\]
and refine it to the following chief series:
\[
\trivsubgp=Y_0<Y_1=S<Y_2<\dots<Y_n=G.
\]
The chief factors of $G$ isomorphic to $G$\nbd modules necessarily occurs as chief factors of $G/S$. As we are only interested in abelian chief factors of $G$, in the following lemma, in order to establish $\delta_G(M)$, we will focus on chief factors of $G/S$. Note that the group $G/S$ is a subgroup of $\Out(S)$, and Theorem 2.5.12(b) in~\cite{GorLyonsSol} shows that
\begin{equation}\label{eq:OutS}
\Out(S)=\Outdiag(S)\rtimes (\Phi_S\Gamma_S),
\end{equation}

\begin{lem}\label{lem:deltaGM}
Let $S$ be a simple group of Lie type, and let $G$ be an almost simple group with socle $S$. Suppose that $M$ is a non-trivial irreducible $\Field{p}G$\nbd module for some prime $p$. Then 
\[
\delta_G(M)\leq1.
\]
Furthermore, in each of the following three cases, $\delta_G(M)=0$:
\begin{enumerate}
\item$S=D_4(q)$ for some prime power $q$ and $p\neq 2,3$;
\item $S=D_{2m}(q)$, for some $m\geq3$ and some prime power $q$;
\item $S\neq D_{2m}(q)$ for any $m, q$ and $p$ does not divide $\order{\Outdiag(S)\cap (G/S)}$.
\end{enumerate}
\end{lem}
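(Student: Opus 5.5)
Since $S$ is non-abelian and is the unique minimal normal subgroup of $G$, every abelian chief factor of $G$ lies above $S$ in the chief series $\trivsubgp<S=Y_1<\dots<Y_n=G$. It is therefore a chief factor of $\bar G=G/S\leq\Out(S)$, and it is complemented in $G$ exactly when it is complemented in $\bar G$. Moreover $M$ is non-trivial, so $\Cent{G}{M}$ is a proper normal subgroup of $G$ containing $S$. This lets us work entirely inside $\bar G$, using the structure \eqref{eq:OutS}. Write $D=\Outdiag(S)\cap\bar G$. The quotient $\bar G/D$ embeds in $\Phi_S\Gamma_S$.

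For $S\neq D_{2m}(q)$, the group $\Phi_S\Gamma_S$ is abelian: it is cyclic, or cyclic times a group of order $2$, apart from the $D_4(q)$ case where $\Gamma_S\cong S_3$. $\Outdiag(S)$ is also abelian, and is cyclic except for $D_{2m}(q)$ with $q$ odd. Consequently, in these cases every non-trivial irreducible chief factor of $\bar G$ must lie inside $D$. Indeed, chief factors of $\bar G/D$ are central in $\bar G/D$, so they are trivial modules, unless $\bar G/D$ is non-abelian. This proves (iii) as soon as $p\nmid\order{D}$: the only non-trivial modules would come from $D$, whose $p$-part is trivial. For the bound $\delta_G(M)\leq1$, I would argue as follows. $D$ is cyclic, so the $p$-part of $D$ has a unique chief series inside $D$. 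All its factors are of order $p$ and isomorphic to the same $\bar G$-module $M$. An abelian chief factor $H/K$ is complemented only if it is not in the Frattini subgroup of the relevant quotient. In a cyclic $p$-group $P\trianglelefteq\bar G$, the factors below $P^p$ lie in $\Frat{P}\leq\Frat{\bar G}$ (pass to $\bar G/K$ appropriately). So at most the top factor $P/P^p$ can be complemented, which gives $\delta\leq1$.

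The exceptional families need separate treatment. For $D_4(q)$, $\Outdiag(S)$ is $1$ or $2^2$ and $\Gamma_S\cong S_3$. The only primes $p$ for which a non-trivial irreducible $\Field{p}\bar G$-module appears as a chief factor are $2$ and $3$: the $2^2$ part acted on by $S_3$, and the $3$-part of $\Gamma_S$ inverted by a graph involution. The field automorphism group is cyclic and central modulo the rest. This gives (i), and for $p=2,3$ a direct check gives at most one complemented copy. For $D_{2m}(q)$ with $m\geq3$, $\Out(S)$ is $(2^2\text{ or }1)\rtimes(\text{cyclic}\times 2)$. Its non-trivial chief factors can only be the $2^2$ diagonal part made irreducible by a field automorphism of order divisible by $3$, or similar. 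I expect to show that any non-trivial one lies in the Frattini subgroup or cannot be complemented, which gives (ii). This is the step I expect to be the main obstacle. It requires precise knowledge of how the field and graph automorphisms act on $\Outdiag(S)$ (as in \cite[Theorem 2.5.12]{GorLyonsSol}), and checking complementation case by case by finding that a complement would force an abelian quotient acting trivially.

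Finally, I would double-check the general Frattini argument, where the subtlety is that complementation is tested in $\bar G/K$. Here I would use that a cyclic normal $p$-subgroup $P/K$ has all its maximal-subgroup intersections containing $\Frat{P/K}$.
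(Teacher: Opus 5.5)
Your reduction to $\bar G=G/S$ and your argument for $S\neq D_{2m}(q)$ are correct and essentially the paper's. The paper works with $\Frat{N}$ for the cyclic group $N=\Outdiag(S)\cap\bar G$ and uses that $\order{N/\Frat{N}}$ is squarefree; you work with the $p$-part $P$ and $\Frat{P}\leq\Frat{\bar G}$, which is the same idea. Your $D_4(q)$ ``direct check'' is also easily completed. Since $\Out(S)\cong S_4\times C_f$ or $S_3\times C_f$, every non-trivial chief factor of $\bar G$ lies in $[\bar G,\bar G]\leq A_4$ and has order greater than $2$. So there is at most one of order $4$ and at most one of order $3$. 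One small slip: $\Cent{G}{M}\geq S$ holds only after you assume $M$ is isomorphic to a chief factor of $G$. You may assume this, since otherwise $\delta_G(M)=0$.

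The genuine gap is part (ii), $S=D_{2m}(q)$ with $m\geq3$, which you leave unproved. Your bound $\delta_G(M)\leq1$ for this family also depends on it, because $\Outdiag(S)$ is not cyclic there. Worse, the structure you plan to analyse does not occur.
\begin{itemize}
\item Field automorphisms centralize $\Outdiag(S)\cong 2^2$. For $q$ odd they act on it as the Frobenius map acts on the elements of order $2$ in $\Field{q}^{\times}$, i.e.\ trivially.
\item For $m\geq3$ there is no triality. There is only a graph involution, which interchanges two of the three involutions of $\Outdiag(S)$.
\item Hence $\Out(S)\cong D_8\times C_f$ for $q$ odd and $\Out(S)\cong C_2\times C_f$ for $q$ even, as in \cite[Chapter 2.6]{KleidmanLiebeck}.
\end{itemize}
Had a field automorphism of order $3$ acted irreducibly on $2^2$, then $\bar G=2^2\rtimes C_3\cong A_4$ would have a complemented non-trivial chief factor. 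In that case (ii) would be \emph{false}, so no Frattini or case-by-case complementation check could rescue that scenario.

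The missing idea is simply that $\Out(S)$, and hence $\bar G$, is nilpotent. Then every chief factor of $\bar G$ is central, i.e.\ a trivial module. Equivalently, as the paper argues, any non-trivial abelian chief factor would lie in $[\bar G,\bar G]\leq\Frat{\bar G}$ and so could not be complemented. This gives $\delta_G(M)=0$ at once.
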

\begin{prf}
In order to prove $\delta_G(M)\leq1$, it suffices to show that non-trivial abelian chief factors of $G/S$ that are complemented in $G$ are pairwise not $G$\nbd isomorphic. In the following, we will split into three cases according to $S$: $S=D_4(q)$ for some prime power $q$, $S=D_{2m}(q)$ for some $m\geq3$ and prime power $q$ and $S\neq D_{2m}(q)$ for any $m$ and $q$.

Suppose that $S=D_4(q)$. Then, according to \cite[Chapter 2.6]{KleidmanLiebeck}, $\Out(S)=E\times F$, where $F$ is cyclic and $E$ is isomorpic to a subgroup of $S_4$. Suppose that $G/S$ is nilpotent, so $[G/S, G/S]\leq\Frat{G/S}$. Consider a chief series of $G/S$ derived from the normal series $\trivsubgp\leq [G/S, G/S]<G/S$. Then any chief factor between $[G/S, G/S]$ and $G/S$ is also a chief factor of the abelianization of $G/S$. Hence it is a trivial chief factor, and all non-trivial chief factors of $G/S$ must be contained in $[G/S, G/S]$. It follows that they are inside $\Frat{G/S}$. However, abelian chief factors contained in the Frattini subgroup are not complemented, and therefore, $\delta_G(M)=0$. Now assume that $G/S$ is not nilpotent, whence $E$ is $S_3$, $A_4$ or $S_4$. Then $[G/S, G/S]$ is a subgroup of $C_3$, $C_2^2$ or $A_4$. A chief factor which is non-trivial as a module must have order greater than $2$, and hence non-trivial chief factors in $[G/S, G/S]$ have pairwise different orders. The conclusion follows. Moreover, if $p\neq2, 3$, then $M$ is not $G$\nbd isomorphic to non-trivial chief factors in $[G/S, G/S]$.

Now assume that $S=D_{2m}(q)$ for some $m\geq3$ and some prime power $q$. In this case, using \cite[Chapter 2.6]{KleidmanLiebeck} again, $\Out(S)=E\times F$, where $F$ is cyclic and $E$ is either abelian or isomorphic to $D_8$. Thus, $G/S$ is nilpotent. By the argumet above, $\delta_G(M)=0$.

Finally, assume that $S\neq D_{2m}(q)$ for any $m$ and $q$. Theorem 2.5.12 in~\cite{GorLyonsSol} tells us that $\Phi_S\Gamma_S$ in Equation~\eqref{eq:OutS} is abelian. Let $N=(G/S)\cap\Outdiag(S)$. As $(G/S)/N$ is isomorphic to a subgroup of $\Phi_S\Gamma_S$, non-trivial abelian chief factors of $G/S$ must be contained in $N$. Furthermore, as $N$ is a normal subgroup of $G/S$, then $\Frat{N}\leq\Frat{G/S}$ and $\Frat{N}$ is normal in $G/S$. Consider a chief series of $G/S$ dervied from the normal series $\trivsubgp\leq \Frat{N}\leq N\leq G/S$. Abelian chief factors of $G/S$ contained in $\Frat{N}$ are not complemented. It follows that non-trivial complemented abelian chief factors of $G$ must appear between $\Frat{N}$ and $N$. Let $H_X=X_2/X_1$ and $H_Y=Y_2/Y_1$ be two abelian complemented chief factors of $G/S$ such that $\Frat{N}\leq X_1< X_2\leq Y_1<Y_2\leq N$. Note that $N$ is cyclic according to \cite[Table 5]{Atlas}, and so $\order{N/\Frat{N}}$ is square-free. Then $\order{H_X}\neq\order{H_Y}$ , whence they are not $G$\nbd isomorphic. Moreover, if $p$ does not divide $\order{N}$, then $M$ is not isomorphic as a $G/S$\nbd module to any abelian chief factor contained in $N$.
\end{prf}

Now that we have determined $\delta_G(M)$, we will establish $\max_M h_G(M)$, where $M$ ranges over the set of non-trivial irreducible $\Field{p}G$\nbd modules. We will use Lemma~\ref{lem:LuStam}\ref{i:Stam} to obtain $\max_M h_G(M)$ based on whether $G$ is $p$\nbd soluble. 

\begin{lem}\label{lem:maxhG}
Let $S$ be a finite simple group of Lie type, and let $G$ be an almost simple group with socle $S$. Let $p$ be a prime. Then
\begin{align*}
\max_M h_G(M)=
\begin{cases}
1 & \text{if $p\nmid\order{S}$},\\
2 & \text{if $p\mid\order{S}$},
\end{cases}
\end{align*}
where $M$ ranges over the non-trivial irreducible $\Field{p}G$\nbd modules.
\end{lem}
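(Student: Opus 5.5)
The plan is to compute $h_G(M)=\lfloor (s_G(M)-1)/r_G(M)\rfloor+2$ directly from Lemma~\ref{lem:LuStam}\ref{i:Lucchini-s}, which gives
\[
s_G(M)=\delta_G(M)+\dim_{\End{M}}\Cohom{G/\Cent{G}{M},M}.
\]
We bound each summand separately and then exhibit a module attaining the bound.

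First suppose $p\nmid\order{S}$. Then $G$ is $p$\nbd soluble, so by Lemma~\ref{lem:LuStam}\ref{i:Stam} the cohomology term vanishes. Hence $s_G(M)=\delta_G(M)\leq 1$ by Lemma~\ref{lem:deltaGM}, and so $s_G(M)-1\in\{-1,0\}$. If $s_G(M)=0$, then since $r_G(M)\geq1$ the floor equals $-1$ and $h_G(M)=1$. If $s_G(M)=1$, the floor is $0$ and $h_G(M)=2$. So to obtain the value $1$ I must rule out $s_G(M)=1$, i.e.\ show $\delta_G(M)=0$ for every non-trivial $M$ when $p\nmid\order{S}$.

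For this I will use the three cases of Lemma~\ref{lem:deltaGM}. In case (ii) there is nothing to do. For $D_4(q)$, the primes $2$ and $3$ divide $\order{S}$, so $p\neq2,3$ and case (i) applies. In the remaining case I need $p\nmid\order{\Outdiag(S)}$. The order of $\Outdiag(S)$ is $\gcd$-type data such as $(n,q-1)$ or $(n,q+1)$, and the relevant factors $q\pm1$ divide $\order{S}$, so every prime dividing $\order{\Outdiag(S)}$ divides $\order{S}$. I would check this against the table of $\Outdiag$ in \cite{GorLyonsSol} or \cite{KleidmanLiebeck}. Since $\max_M h_G(M)$ is taken over a non-empty set, this case gives the value $1$. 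I should also confirm that a non-trivial irreducible $\Field{p}G$\nbd module exists; $G$ is non-abelian, so the regular module has a non-trivial composition factor.

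Now suppose $p\mid\order{S}$. For the upper bound, Lemma~\ref{lem:LuStam}\ref{i:H1<rGM} gives $\dim\Cohom{G/\Cent{G}{M},M}\leq r_G(M)-1$, and $\delta_G(M)\leq1$, so $s_G(M)-1\leq r_G(M)-1$. Hence the floor is at most $0$ and $h_G(M)\leq2$. For the lower bound I need a non-trivial $M$ with $s_G(M)\geq1$. By Lemma~\ref{lem:LuStam}\ref{i:Stam}, since $G$ is not $p$\nbd soluble, some irreducible $\Field{p}G$\nbd module $M$ has $\Cohom{G/\Cent{G}{M},M}\neq0$. Such an $M$ must be non-trivial: for a trivial module $G/\Cent{G}{M}=1$, whose cohomology is zero. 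For this $M$ we get $s_G(M)\geq1$, so $h_G(M)\geq2$ and the maximum is exactly $2$.

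The main obstacle is the divisibility claim in the first case, that every prime dividing $\order{\Outdiag(S)}$ (or the relevant part of $G/S$ meeting $\Outdiag(S)$) divides $\order{S}$. I would verify this family by family from the explicit orders of the diagonal automorphism groups, using that $\order{\Outdiag(S)}$ always divides a factor such as $q-\epsilon$ appearing in $\order{S}$.
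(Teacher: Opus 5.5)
Your proof is correct and follows the paper's argument essentially step for step. For $p\nmid\order{S}$ you use Stammbach's theorem together with Lemma~\ref{lem:deltaGM}, and for $p\mid\order{S}$ you use Lucchini's bound together with Stammbach's theorem; where you propose to check family by family that primes dividing $\order{\Outdiag(S)}$ divide $\order{S}$, the paper simply cites Theorem~2.5.12(c) of \cite{GorLyonsSol}.

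One small correction to your side remark: a non-trivial irreducible $\Field{p}G$\nbd module exists because $G$ is not a $p$\nbd group, not merely because $G$ is non-abelian. A faithful module whose composition factors are all trivial forces $G$ to be a $p$\nbd group, and non-abelian $p$\nbd groups exist.
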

\begin{prf}
Assume first that $p\nmid\order{S}$. Since $G$ is $p$\nbd soluble, Lemma~\ref{lem:LuStam}\ref{i:Stam} tells us that $\Cohom{G/C_G(M),M}=0$ for all irreducible $\Field{p}G$\nbd modules $M$. Hence we deduce from Lemma~\ref{lem:LuStam}\ref{i:Lucchini-s} that $s_G(M)=\delta_G(M)$. Substituting this into Equation~\eqref{eq:h_G} gives
\[
h_{G}(M) = \left\lfloor \frac{\delta_{G}(M)-1}{r_{G}(M)} \right\rfloor +  2. 
\]
As $p\nmid\order{S}$, certainly $p\neq 2,3$ when $S=D_4(q)$ according to the order of $D_4(q)$ given in~\cite[Table 6]{Atlas}. On the other hand, when $S\neq D_{2m}(q)$, it follows from Theorem 2.5.12(c) in \cite{GorLyonsSol} that $p\nmid\order{\Outdiag(S)}$ when $p\nmid\order{S}$. Lemma~\ref{lem:deltaGM} then yields that $\delta_G(M)=0$ for all non-trivial irreducible $\Field{p}G$\nbd modules when $p\nmid\order{S}$. Then $\max_M h_G(M)=1$.

Now assume that $p\mid\order{S}$. Lemma~\ref{lem:deltaGM} gives $\delta_G(M)\leq1$ for all non-trivial irreducible $\Field{p}G$\nbd modules. We obtain from Lemma~\ref{lem:LuStam}\ref{i:Lucchini-s} and \ref{lem:LuStam}\ref{i:H1<rGM} that 
\begin{align*}
s_G(M)&=\delta_{G}(M) + \dim_{\End{M}} \Cohom{G/\Cent{G}{M},M}\\
&<1+r_G(M).
\end{align*}
Substituting this into Equation~\eqref{eq:h_G} gives
\[
h_{G}(M) = \left\lfloor \frac{s_{G}(M)-1}{r_{G}(M)} \right\rfloor +2<3.
\]
On the other hand, as $p\mid\order{S}$, $G$ is not $p$\nbd soluble and there is a non-trivial $\Field{p}G$\nbd module $M_0$ such that $\Cohom{G/\Cent{G}{M_0},M_0}$ is not trivial by Lemma~\ref{lem:LuStam}\ref{i:Stam}. Thus we deduce that $\dim_{\End{M_0}} \Cohom{G/\Cent{G}{M_0},M_0}\geq1$ and hence $s_G(M_0)\geq\delta_G(M_0)+1$ by Lemma~\ref{lem:LuStam}\ref{i:Lucchini-s}. Using this inequality in Equation~\eqref{eq:h_G} for $M_0$, we obtain
\[
h_G(M_0)\geq\left\lfloor \frac{\delta_G(M_0)}{r_{G}(M_0)} \right\rfloor +2\geq2.
\]
We then conclude that $\max h_G(M)=2$, where $M$ ranges over the non-trivial irreducible $\Field{p}G$\nbd modules.
\end{prf}

Now let $S$ be a non-abelian simple group of Lie type, let $G$ be any almost simple group with socle $S$ and let $A$ be a finite abelian group. Let $W=A\wr G$ be the wreath product of the abelian group $A$ by the almost simple group $G$ constructed via the regular action. We will use Lemma~\ref{lem:LuStam}\ref{i:Lucchini-AwrG} to establish $d(W)$.

\begin{lem}\label{lem:AwrGLietype}
Let $S$ be a simple group of Lie type and $G$ be an almost simple group with socle $S$. Let $A$ be a finite abelian group and set $W=A\wr G$ to be the wreath product of the abelian group by the almost simple group with respect to the regular action. Then
\[
d(W)=\max_q(d(A\times G), d(A)+1, d_q(A)+2),
\]
where $q$ ranges over the set of the prime numbers dividing $\order{A}$ and $\order{S}$.
\end{lem}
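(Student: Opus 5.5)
We plug Lemma~\ref{lem:maxhG} into Lucchini's formula, Lemma~\ref{lem:LuStam}\ref{i:Lucchini-AwrG}. For each prime $p\mid\order{A}$, that lemma gives $\rho_p=\max_M h_G(M)+d_p(A)$, where $M$ runs over the non-trivial irreducible $\Field{p}G$\nbd modules. Such modules always exist: $G$ is non-abelian and $O_p(G)=\trivsubgp$, since the socle $S$ is non-abelian simple, so $G$ has a faithful irreducible representation over $\Field{p}$. In particular $\rho_p$ is never assigned the value~$0$. By Lemma~\ref{lem:maxhG}, $\rho_p=d_p(A)+1$ when $p\nmid\order{S}$ and $\rho_p=d_p(A)+2$ when $p\mid\order{S}$. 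Hence
\[
d(W)=\max\Bigl(d(A\times G),\ \max_{p\mid\order{A},\,p\nmid\order{S}}(d_p(A)+1),\ \max_{q\mid\order{A},\,q\mid\order{S}}(d_q(A)+2)\Bigr).
\]

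It remains to show that this equals $\max_q(d(A\times G),d(A)+1,d_q(A)+2)$. We will show that the term $d(A)+1$ may replace $\max_{p\nmid\order{S}}(d_p(A)+1)$. One direction is immediate, since $d_p(A)\le d(A)$. For the other, choose a prime $p_0\mid\order{A}$ with $d_{p_0}(A)=d(A)$, which exists by the Fundamental Theorem of Finite Abelian Groups. If $p_0\nmid\order{S}$, then $d(A)+1$ is one of the $\rho_p$. If $p_0\mid\order{S}$, then $\rho_{p_0}=d(A)+2>d(A)+1$, so adding $d(A)+1$ does not change the maximum. The two maxima therefore agree.

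Finally we check the degenerate cases. If $A$ is trivial, the formula of Lemma~\ref{lem:LuStam}\ref{i:Lucchini-AwrG} has no primes, and $W=G$ with $d(G)\ge 2>d(A)+1=1$, so the extra term $d(A)+1$ is harmless. If no prime divides both $\order{A}$ and $\order{S}$, the set of $q$ is empty and only the first two terms remain, consistent with the computation above. The substantive input is Lemma~\ref{lem:maxhG}, which is already established. The only point needing care is the non-vacuity of the set of non-trivial irreducible modules, so that Lemma~\ref{lem:maxhG}, which implicitly assumes such $M$ exist, applies to every prime $p$.
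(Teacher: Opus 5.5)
Your proposal is correct and takes the same route as the paper: you substitute Lemma~\ref{lem:maxhG} into Lucchini's formula to get $\rho_p=d_p(A)+1$ or $d_p(A)+2$ according as $p\nmid\order{S}$ or $p\mid\order{S}$, and then absorb the $p\nmid\order{S}$ terms into $d(A)+1$ using a prime $p_0$ with $d_{p_0}(A)=d(A)$. Your extra checks, that non-trivial irreducible $\Field{p}G$-modules exist (the cleanest reason is $O_p(G)=\trivsubgp\neq G$) and that the trivial-$A$ case is consistent, are harmless additions the paper leaves implicit.
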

\begin{prf}
By Lemma~\ref{lem:LuStam}\ref{i:Lucchini-AwrG}, 
\[
d(A\wr G)=\max\left( d(A\times G), \max_{p\mid\order{A}}\rho_p\right),
\]
where
\[
\rho_p=d_p(A)+\max_Mh_G(M),
\]
and $M$ ranges over the non-trivial irreducible $\Field{p}G$\nbd modules. By Lemma~\ref{lem:maxhG}, for every prime $p\mid\order{A}$, 
\begin{align*}
\rho_p=
\begin{cases}
d_p(A)+1 & \text{if $p\nmid\order{S}$},\\
d_p(A)+2 & \text{if $p\mid\order{S}$}.
\end{cases}
\end{align*}
Since $A$ is abelian, $d(A)=\max_{p\mid\order{A}}d_p(A)$. Hence, if $p\mid\order{A}$ and $p\nmid\order{S}$, then $\rho_p=d_p(A)+1\leq d(A)+1$, whereas if $p\mid\order{A}$ and $p\mid\order{S}$, then $\rho_p=d_p(A)+2$. Moreover, choosing a prime $r\mid\order{A}$ such that $d_r(A)=d(A)$, shows that the term $d(A)+1$ is already included in $\max_{p\mid\order{A}}\rho_p$: if $r\nmid\order{S}$, then $\rho_p=d(A)+1$, while if $r\mid\order{S}$, then $\rho_r=d(A)+2$. Therefore
\[
\max_{p\mid\order{A}}\rho_p=\max_q(d(A)+1, d_q(A)+2),
\]
where $q$ ranges over the prime numbers dividing both $\order{A}$ and $\order{S}$. Substituting this into Lemma~\ref{lem:LuStam}\ref{i:Lucchini-AwrG} completes the proof.
\end{prf}

\section{Proof of main theorem}\label{sec:Wsimpleregular}

We will now establish the main theorem. Accordingly, let $k\geq2$, and let $G_1, G_2,\dots, G_k$ be almost simple groups. Define 
\[
W=G_k\wr G_{k-1}\wr\dots\wr G_1
\]
to be the wreath product constructed via the regular action of each factor. We also define
\[
H=G_k\wr G_{k-1}\wr \dots\wr G_2
\]
and set $A=H/H'$ to be the abelianization of $H$; that is,
\[
A=(G_k/G_k')\times (G_{k-1}/G_{k-1}')\times\dots\times (G_{2}/G_{2}').
\]

Combining Lemma~\ref{lem:AwrGLietype} with results in~\cite{Lu97} establishes $d(A\wr G_1)$.

\begin{lem}\label{lem:keylemmaregular-abelian}
Let $S$ be the socle of $G_1$. The minimum number of generators of the wreath product $A\wr G_1$ is given by
\[
d(A\wr G_1)=\max_q(d(A\times G_1), d(A)+1, d_q(A)+2),
\]
where $q$ ranges over the common prime divisors of $\order{A}$ and $\order{S}$.
\end{lem}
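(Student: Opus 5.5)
We split according to the type of the socle $S$ of $G_1$. The almost simple group $G_1$ has socle $S$, which is either an alternating group, a sporadic group, or a simple group of Lie type (by the Classification). If $S$ is of Lie type, then Lemma~\ref{lem:AwrGLietype} applied with $G=G_1$ and the finite abelian group $A$ gives exactly the claimed formula, and there is nothing more to do. So the work lies in the remaining cases, $S=A_n$ ($n\geq5$) and $S$ sporadic. For these we again use Lemma~\ref{lem:LuStam}\ref{i:Lucchini-AwrG} and reproduce the argument of Lemma~\ref{lem:AwrGLietype}. The formal deduction there used only the conclusion of Lemma~\ref{lem:maxhG}: that $\max_M h_{G_1}(M)$ is $1$ when $p\nmid\order{S}$ and $2$ when $p\mid\order{S}$. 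Hence it suffices to establish that statement for these socles.

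To prove the analogue of Lemma~\ref{lem:maxhG}, we note that its proof used only two facts beyond the general results of Lemma~\ref{lem:LuStam}.
\begin{enumerate}
\item $\delta_{G_1}(M)\leq1$ for every non-trivial irreducible $\Field{p}G_1$\nbd module $M$.
\item $\delta_{G_1}(M)=0$ whenever $p\nmid\order{S}$.
\end{enumerate}
Given these, the upper bound $h_{G_1}(M)<3$ follows from Lemma~\ref{lem:LuStam}\ref{i:Lucchini-s},\ref{i:H1<rGM}. The lower bound $h_{G_1}(M_0)\geq2$ for some $M_0$ when $p\mid\order{S}$ follows from Stammbach's criterion, Lemma~\ref{lem:LuStam}\ref{i:Stam}. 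The value $1$ when $p\nmid\order{S}$ follows since then $s_{G_1}(M)=\delta_{G_1}(M)=0$.

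For alternating and sporadic socles we have $\order{\Out(S)}\leq 4$. More precisely, $\Out(S)\cong C_2$ in all cases except $S=A_6$, where $\Out(S)\cong C_2^2$; and $\Out(S)$ is $1$ or $C_2$ for the sporadic groups. So $G_1/S$ is abelian, being a subgroup of $C_2$ or $C_2^2$. The chief factors of $G_1$ above $S$ are then chief factors of the abelian group $G_1/S$, and these are trivial $G_1$\nbd modules. Every non-abelian-free chief factor of $G_1$ below $S$ is $S$ itself, which is non-abelian. Therefore no non-trivial irreducible module is isomorphic to an abelian chief factor, so $\delta_{G_1}(M)=0$ for every non-trivial irreducible $M$. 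This gives both (i) and (ii) at once.

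With $\max_M h_{G_1}(M)$ determined in all cases, the computation of $\max_{p\mid\order{A}}\rho_p$ in the proof of Lemma~\ref{lem:AwrGLietype} goes through verbatim. Choosing a prime $r$ with $d_r(A)=d(A)$ shows that the term $d(A)+1$ is attained or dominated. Substituting into Lemma~\ref{lem:LuStam}\ref{i:Lucchini-AwrG} gives the formula. The only real obstacle is making sure the Lie type / non-Lie type dichotomy is exhaustive and that the outer automorphism groups are correctly quoted. Alternatively, we can avoid this case split entirely by observing that Lemmas~\ref{lem:deltaGM} and \ref{lem:maxhG} are the only Lie-type-specific inputs.
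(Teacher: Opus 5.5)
Your proposal is correct and follows the paper's proof. The Lie-type case comes from Lemma~\ref{lem:AwrGLietype}, and for alternating or sporadic socles the abelianness of $\Out(S)$ makes every abelian chief factor of $G_1$ central, so $\delta_{G_1}(M)=0$ for all non-trivial $M$. The only difference is the final step: the paper cites Lucchini's Corollary~5 directly, whereas you rerun the arguments of Lemmas~\ref{lem:maxhG} and~\ref{lem:AwrGLietype}, which is legitimate since those proofs use only $\delta_{G_1}(M)\leq 1$ and $\delta_{G_1}(M)=0$ for $p\nmid\order{S}$.
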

\begin{prf}
The case when $S$ is a simple group of Lie type is given in Lemma~\ref{lem:AwrGLietype}. If $S$ is an alternating group, then $\Out(S)$ is abelian. According to~\cite[Table 1]{Atlas}, $\Out(S)$ is also abelian when $S$ is one of the sporadic groups. Since $G_1/S$ is a subgroup of $\Out(S)$, and in the alternating and sporadic cases considered here, $\Out(S)$ is abelian, the action of $G_1$ on any abelian chief factor above $S$ is trivial. Therefore such chief factors are trivial as $G_1$\nbd modules. Then for any non-trivial $\Field{p}G_1$\nbd module $M$, $\delta_{G_1}(M)=0$. In this case, the formula is a direct consequence of~\cite[Corollary 5]{Lu97}.
\end{prf}

The key step of the main theorem is the following lemma, which reduces $d(W)$ to $d(A\wr G_1)$.

\begin{lem}\label{lem:regularinduction}
  Let $W$~and~$A$ be as defined above. Then
  \begin{equation}
    d(W) = d(A \wr G_{1}).
    \label{eq:regularreduction}
  \end{equation}
\end{lem}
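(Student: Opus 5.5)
The plan is to prove the two inequalities separately. The easy direction is $d(W)\geq d(A\wr G_1)$. We have $W=H\wr G_1$, with $G_1$ acting regularly on a set of size $n=\order{G_1}$. The epimorphism $H\to H/H'=A$ induces an epimorphism of the base groups $H^{n}\to A^{n}$, and this is compatible with the action of $G_1$. So $A\wr G_1$ is a quotient of $W$, and hence $d(W)\geq d(A\wr G_1)$.

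For the reverse inequality I would go through the presentation rank, using Lemma~\ref{lem:pr}: $d(W)=\pr{W}+d(I_W)$. First I would bound $d(I_W)$ by Lemma~\ref{lem:LuStam}\ref{i:augmentationgenerator}, applied with $H$ in place of $H$ and $G_1$ in its regular action of degree $n$:
\[
d(I_W)=\max\left(d(I_{A\wr G_1}),\left\lfloor\frac{d(I_H)-2}{n}\right\rfloor+2\right).
\]
Next I would show the second term is at most $2$, which needs $d(I_H)\leq d(H)<n+2$. I would get a crude bound for $d(H)$ by induction on the number of factors. Generators of an iterated wreath product can be assembled from generators of the factors, so $d(H)\leq\sum_{i\geq 2}d(G_i)$. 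By the induction hypothesis applied to the shorter product, $d(H)$ is in fact bounded by roughly $d(A')+3$ for a corresponding abelian group $A'$, which is at most about $k+2$. This bound does not yet beat $n$: since $\order{G_1}\geq 60$, the comparison with $n$ only succeeds when $k$ is small relative to $\order{G_1}$. To make it uniform in $k$, I would instead use Lemma~\ref{lem:LuStam}\ref{i:augmentationgenerator} inductively on $I_H$ itself. That gives $d(I_H)\leq\max(d(I_{A_H\wr G_2}),2)$, where $A_H$ is the abelianization of $G_k\wr\dots\wr G_3$, and this is controlled by the small numbers coming from Lemma~\ref{lem:keylemmaregular-abelian}. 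With that in hand, $\lfloor (d(I_H)-2)/n\rfloor+2\leq 2$. Since $A\wr G_1$ is non-cyclic, $d(A\wr G_1)\geq 2$, and we conclude $d(I_W)\leq\max(d(I_{A\wr G_1}),2)\leq d(A\wr G_1)$.

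It remains to deal with $\pr{W}$, and this is the step I expect to be the main obstacle. If $\pr{W}=0$ we are done by the previous paragraph. If $\pr{W}>0$, Lemma~\ref{lem:Gruenberg} applies only to soluble normal subgroups, whereas the kernel $(H')^{n}$ is not soluble. My first attempt would be to iterate along a chief series of $(H')^{n}$ that is normal in $W$. Abelian chief factors can be stripped with Lemma~\ref{lem:Gruenberg}. For the non-abelian ones I would use the Lucchini--Menegazzo principle underlying Theorem~\ref{thm:LuMe}: quotienting by a non-abelian minimal normal subgroup does not raise $d$ beyond $\max(2,d(\cdot))$. Combining these, $d(W)\leq\max(2,d(A\wr G_1))=d(A\wr G_1)$.

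Alternatively, one can avoid $\pr{W}$ altogether. I would show directly that any generating tuple of $A\wr G_1$ of length $d=d(A\wr G_1)\geq 2$ lifts to a generating tuple of $W$. This is done by a counting or Eulerian-function argument on the non-abelian chief factors $S_i^{m}$ in $(H')^{n}$. Their multiplicities are far smaller than the number of $d$-tuples generating $S_i$, since $n$ is large, and this is the standard argument behind the $2$-generation results of \cite{MRQ}.
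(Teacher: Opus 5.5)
\textbf{Gap: the case $\pr{W}>0$.} The principle you use for the non-abelian chief factors is false. It says that $d(G)\leq\max(2,d(G/N))$ for every non-abelian minimal normal subgroup $N$. For $G=A_5^{20}$ and $N$ one direct factor, $d(G)=3$ while $d(G/N)=d(A_5^{19})=2$. Theorem~\ref{thm:LuMe} needs $N$ to be the \emph{unique} minimal normal subgroup, and inside $(H')^{n}$ this generally fails.

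Already for $k=3$ it breaks. Let $B=G_3^{\order{G_2}}$ be the base group of $H=G_3\wr G_2$, and let $S_2$ be the socle of $G_2$. Take any chief series of $W$ refining $1<(H')^{n}$. Its first term not contained in $B^{n}$ gives a chief factor $X/Y$ with $Y\leq B^{n}\cap(H')^{n}$ and $XB^{n}/B^{n}=S_2^{n}$. If $G_3\neq G_3'$, then $B^{n}\not\leq(H')^{n}$. Hence $B^{n}/Y$ is a non-trivial normal subgroup of $W/Y$ meeting $X/Y$ trivially; concretely, $W/(B\cap H')^{n}\cong((G_3/G_3')\times G_2)\wr G_1$. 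So your iteration breaks down exactly in the non-perfect situations the lemma is about.

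The Eulerian-function alternative does not close the gap. It is only asserted, and $(H')^{n}$ also contains abelian chief factors. Examples are those in $\{b\in B:\prod_y b_y\in G_3'\}/(G_3')^{\order{G_2}}$, or in $G_i'$ modulo the socle of $G_i$ when $G_i$ modulo its socle is non-abelian. A count over the non-abelian factors ignores these, and they are precisely what the presentation rank has to control.

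\textbf{The missing idea: peel off the top factor and induct on $k$.} Peel off $G_k$ rather than the bottom layer $(H')^{n}$. Write $W=G_k\wr_{\Gamma}L$ with $L=G_{k-1}\wr\dots\wr G_1$, and let $F$ be the socle of $G_k$. Then $F^{\Gamma}$ \emph{is} the unique minimal normal subgroup of $W$. So Theorem~\ref{thm:LuMe} gives $d(W)=d(\bar W)$ with $\bar W=(G_k/F)\wr L$, whose base $(G_k/F)^{\Gamma}$ is soluble with quotient $L$.

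With this, your dichotomy on $\pr{W}$ works:
\begin{enumerate}
\item Since $d(\bar W)=d(W)$ and $I_{\bar W}$ is an image of $I_W$, we get $\pr{\bar W}\geq\pr{W}>0$.
\item Lemma~\ref{lem:Gruenberg} then gives $d(\bar W)=d(L)$.
\item Put $C=(G_{k-1}/G_{k-1}')\times\dots\times(G_2/G_2')$. The induction hypothesis gives $d(L)=d(C\wr G_1)\leq d(A\wr G_1)$; for $k=2$, simply $d(L)=d(G_1)$.
\end{enumerate}
This is the paper's argument, except that the paper splits on $\pr{\bar W}$ and, in the rank-zero case, applies Lemma~\ref{lem:LuStam}\ref{i:augmentationgenerator} to $\bar W$.

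\textbf{A smaller error in the $\pr{W}=0$ case.} Your claim $\lfloor (d(I_H)-2)/n\rfloor+2\leq 2$ is false in general. Indeed $d(I_H)\geq d(I_H/I_H^{2})=d(A)$, and this is at least $n+2$ when $G_2,\dots,G_k\cong S_5$ and $k\geq n+3$. The claim is not needed, though. Since $d(I_H)\leq d(H)\leq d(A)+3$ and $d(A\wr G_1)\geq d(A)+1$, that term is already bounded by $d(A\wr G_1)$.
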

\begin{prf}
We first observe that $A\wr G_1$ is a homomorphic image of $W=H\wr G_1$, and so $d(W) \geq d(A \wr G_{1})$. We will prove the reverse inequality in the rest of the proof.

Set $L=G_{k-1}\wr G_{k-2}\wr \dots\wr G_1$ acting on $\Gamma=G_{k-1}\times \dots\times G_1$ such that $W=G_k\wr_{\Gamma} L$. Denote the socle of $G_k$ by $F$. Indeed, $F$ is the unique minimal normal subgroup of $G_k$, it is non-abelian simple, and the group $L$ acts transitively on $\Gamma$; hence $N=F^{\Gamma}$ is the unique minimal normal subgroup of $G_k\wr_{\Gamma} L$. Theorem~\ref{thm:LuMe} then yields
\[
d(W)=d(W/N)=d((G_k/ F)\wr L).
\]
It suffices to prove that $d((G_k/ F)\wr L)\leq d(A \wr G_{1})$, and we will use induction in the following.

Suppose that $k=2$, and then $W/N=(G_2/F)\wr G_1$. Note that $(G_2/F)/(G_2/F)'\cong G_2/(G_2'F)$. As $F$ is non-abelian simple, $F=F'\leq G_2'$ and $(G_2/F)/(G_2/F)'\cong G_2/G_2'\cong A$. Using the fact that $G_2/F$ is soluble, Lemma~\ref{lem:LuStam}\ref{i:soluble} yields that
\begin{align*}
d((G_2/F)\wr G_1)&=\max\left(d(((G_2/F)/(G_2/F)')\wr G_1),\left\lfloor\frac{d(G_2/F)-2}{\order{G_1}}\right\rfloor+2\right)\\
&=\max\left(d(A\wr G_1),\left\lfloor\frac{d(G_2/F)-2}{\order{G_1}}\right\rfloor+2\right).
\end{align*}
As $2\leq d(G_1), d(G_2)\leq 3$ by the Corollary of Theorem 1 in~\cite[p.195]{DaLu}, 
\begin{equation}
\left\lfloor\frac{d(G_2/F)-2}{\order{G_1}}\right\rfloor+2\leq \left\lfloor\frac{1}{\order{G_1}}\right\rfloor+2\leq d(G_1)\leq d(A\wr G_1).
\end{equation}
We therefore establish that $d((G_2/F)\wr G_1)=d(A\wr G_1)$.

Now assume that $k>2$ and that the lemma holds for any iterated wreath product of almost simple groups with fewer than $k$ factors. We shall prove the lemma by splitting into two cases according to the presentation rank of $\bar W=(G_k/ F)\wr L$.

Suppose first that $\pr{\bar W}=0$. Set $\bar H=(G_k/F)\wr G_{k-1}\wr\dots\wr G_2$ such that $\bar W=\bar H\wr G_1$. Note that $(G_k/F)/(G_k/F)'\cong G_k/G_k'$ because $F\leq G_k'$. It follows that $\bar H/\bar H'\cong A$. Then Lemma~\ref{lem:pr} gives $d(\bar W)=d(I_{\bar W})$. Substituting this into Lemma~\ref{lem:LuStam}\ref{i:augmentationgenerator} yields
\begin{align}
d(\bar W)&=\max\left(d(I_{A\wr G_1}), \left\lfloor\frac{d(I_{\bar H})-2}{\order{G_1}}\right\rfloor+2\right)\notag\\
&\leq\max\left(d(A\wr G_1), \left\lfloor\frac{d(\bar H)-2}{\order{G_1}}\right\rfloor+2\right). \label{eq:dbarW=max}
\end{align}
Set $B=G_k/G_k'\times\dots\times G_3/G_3'$ so that $d(H)=d(B\wr G_2)$ by the induction hypothesis. Using the fact that $d(G_2)\leq 3$ from the Corollary of Theorem 1 in~\cite[p.195]{DaLu}, we deduce that
\[
d(\bar H)\leq d(H)\leq d(B)+d(G_2)\leq d(A)+3.
\]
Lemma~\ref{lem:keylemmaregular-abelian} gives that $d(A\wr G_1)\geq d(A)+1$, and so
\[
\left\lfloor\frac{d(\bar H)-2}{\order{G_1}}\right\rfloor+2\leq \left\lfloor\frac{d(A\wr G_1)}{\order{G_1}}\right\rfloor+2\leq d(A\wr G_1).
\]
Then $d(\bar W)\leq d(A\wr G_1)$ by Equation~\eqref{eq:dbarW=max}, and hence we have established Equation~\eqref{eq:regularreduction}.

Suppose now that $\pr{\bar W}>0$. As $\bar W=(G_k/F)\wr L$ and $G_k/F$ is soluble, it follows from Lemma~\ref{lem:Gruenberg} that $d(\bar W)=d(L)$. Set $C=(G_{k-1}/G_{k-1}')\times \dots\times (G_2/G_{2}')$. Then according to the induction hypothesis, $d(L)\leq d(C\wr G_1)\leq d(A\wr G_1)$. We conclude that $d(\bar W)\leq d(A\wr G_1)$ regardless of $\pr{\bar W}$ and the proof is completed.
\end{prf}

Combining Lemmas~\ref{lem:keylemmaregular-abelian} and \ref{lem:regularinduction} gives the main theorem of this paper.

{\small


\begin{thebibliography}{99} \setlength{\itemsep}{-0.2ex}

\bibitem{Bhatt}
  M. Bhattacharjee. The probability of generating certain
  profinite groups by two elements. \textit{Israel
    J. Math.},~\textbf{86} (1994), no.~1--3, pp. 311–329.

\bibitem{Bond}
  I. V. Bondarenko. Finite generation of iterated wreath
  products. \textit{Arch.\ Math.},~\textbf{95} (2010), pp. 301--308.

\bibitem{Atlas}
J. H. Conway, R. T. Curtis, S. P. Norton, R. A. Parker, R. A. Wilson. \textit{ $\Bbb{ATLAS}$ of finite groups}. Oxford University Press, Eynsham, 1985.

\bibitem{DaLu}
F. Dalla~Volta, A. Lucchini. Generation of almost simple groups. \textit{J. Algebra}, {\bf 178} (1995), no.~1, pp. 194–223. 

\bibitem{GorLyonsSol}
D. Gorenstein, R. Lyons, R. Solomon. \textit{The classification of the finite simple groups}. Mathematical Surveys and Monographs, 40.1. American Mathematical Society, Providence, RI, 1994.

\bibitem{Gru76}
K. W. Gruenberg. Groups of non-zero presentation rank. \textit{Symposia Math.}, {\bf 17} (1976), pp. 215–224.

\bibitem{KleidmanLiebeck}
P. Kleidman, M. Liebeck. \textit{The subgroup structure of the finite classical groups}. London Mathematical Society Lecture Note Series, 129, Cambridge University Press, Cambridge, 1990

\bibitem{Lafuente}
J.~P. Lafuente. Nonabelian crowns and Schunck classes of finite groups. \textit{Arch. Math. (Basel)}, {\bf 42} (1984), no.~1, pp. 32–39.

 
\bibitem{LuQuick}  
  J. Lu, M. Quick. Generation of iterated wreath products constructed from alternating, symmetric and cyclic groups. \textit{Internat. J. Algebra Comput}, {\bf 35} (2025), no.~5, pp. 713--731.

\bibitem{Lu97}
A. Lucchini. Generating wreath products and their augmentation ideals. \textit{Rend. Sem. Mat. Univ. Padova}, {\bf 98} (1997), pp. 67–87.

\bibitem{LuMe}
A. Lucchini, F. Menegazzo. Generators for finite groups with a unique minimal normal subgroup. \textit{Rend. Sem. Mat. Univ. Padova}, {\bf98} (1997), pp. 173–191.

\bibitem{MRQ}
  M. Quick. Probabilistic generation of wreath products of
  non-abelian finite simple groups,~II.
  \textit{Internat.\ J. Algebra Comput.}, \textbf{16} (2006), no.~3,
  pp. 493--503.

\bibitem{Robinson}
D.~J.~S. Robinson. {\it A course in the theory of groups}. Second edition,
Graduate Texts in Mathematics, 80, Springer, New York, 1996.

\bibitem{Roggenkamp}
K. W. Roggenkamp. Relation modules of finite groups and related topics. \textit{Algebra i Logika}, \textbf{12} (1973), pp. 351--369.

\bibitem{Stammbach}
U. Stammbach. Cohomological characterisations of finite solvable and nilpotent groups. \textit{J. Pure Appl. Algebra}, \textbf{11} (1977), pp. 293--301.

\bibitem{Wiegold}
J. Wiegold. Growth sequences of finite groups. \textit{J. Austral. Math. Soc.}, {\bf 17} (1974), pp. 133--141.

\bibitem{Woryna}
A. Woryna. The rank and generating set for inverse limits of wreath products of Abelian groups. \textit{Arch. Math. (Basel)}, {\bf 99} (2012), no.~6, pp. 557--565.

\end{thebibliography}
\end{document}